\documentclass[12pt, one sided]{amsart}
\RequirePackage[colorlinks,citecolor=blue,urlcolor=blue]{hyperref}

\usepackage{latexsym,amssymb,amsmath,amsfonts,amsthm}
\usepackage{amsthm,amsmath}
\usepackage{float}
\usepackage{enumerate}
\usepackage[margin=3cm]{geometry}
\usepackage{bbm}
\usepackage{subfigure}
\usepackage{graphicx}
\usepackage[toc,page]{appendix}
\usepackage{amsfonts}
\usepackage[all]{xy}
\usepackage{caption}
\usepackage{geometry}                
\geometry{a4paper}                   
\usepackage{graphicx}
\usepackage{amssymb}
\usepackage{epstopdf}
\usepackage{color}
\usepackage{bbm, dsfont}
\DeclareGraphicsRule{.tif}{png}{.png}{`convert #1 `dirname #1`/`basename #1 .tif`.png}
\usepackage{hyperref}
\usepackage{cleveref}
\usepackage[utf8]{inputenc}
\setcounter{tocdepth}{2}

\usepackage{amssymb,amsthm,amsmath,amssymb,wrapfig,dsfont}
\usepackage[dvipsnames]{xcolor}
\definecolor{myred}{RGB}{251,154,133}
\definecolor{myblue}{RGB}{153,206,227}
\definecolor{mylightblue}{RGB}{0, 150, 255}
\definecolor{mygreen}{RGB}{32, 210, 64}
\definecolor{mygray}{RGB}{220, 220, 220}

\usepackage{tikz}
\usetikzlibrary{decorations.pathmorphing}
\tikzset{snake it/.style={decorate, decoration=snake}}
\usetikzlibrary{shapes.geometric,positioning,decorations.pathreplacing} 

\usepackage{breqn}
\usepackage{upgreek}


\def\beq{ \begin{equation} }
	\def\eeq{ \end{equation} }

\def\ms{\medskip}

\def\square{\vcenter{\vbox{\hrule height .4pt
			\hbox{\vrule width .4pt height 5pt \kern 5pt
				\vrule width .4pt} \hrule height .4pt}}}


\newcommand{\BZ}{{\mathbb{Z}}}



\newcommand{\e}{{\bf E}}
\newcommand{\bae}{\begin{equation}\begin{aligned}}
\newcommand{\eae}{\end{aligned}\end{equation}}
\newcommand{\ev}{\mathbf{E}}

\DeclareFontFamily{OML}{rsfs}{\skewchar\font'177}
\DeclareFontShape{OML}{rsfs}{m}{n}{ <5> <6> rsfs5 <7> <8> <9>
	rsfs7 <10> <10.95> <12> <14.4> <17.28> <20.74> <24.88> rsfs10 }{}
\DeclareMathAlphabet{\mathfs}{OML}{rsfs}{m}{n}







\newcounter{relctr} 
\everydisplay\expandafter{\the\everydisplay\setcounter{relctr}{0}} 


\makeatletter

\pdfpageheight\paperheight
\pdfpagewidth\paperwidth

\numberwithin{equation}{section}
\theoremstyle{plain}
\newtheorem{thm}{\protect\theoremname}
\theoremstyle{plain}
\newtheorem{prop}[thm]{\protect\propositionname}
\theoremstyle{plain}
\newtheorem{lem}[thm]{\protect\lemmaname}

\usepackage{tikz}

\makeatother

\usepackage{babel}
\providecommand{\lemmaname}{Lemma}
\providecommand{\propositionname}{Proposition}
\providecommand{\theoremname}{Theorem}

\begin{document}
\title{A toy model for DLA arm growth in a wedge}
\author{Oren Louidor}
\thanks{The first and last authors are supported by BSF grant 2018330. The first author is supported by ISF grants 1382/17 and 2870/21. The last author is supported by the Chaya career advancement chair. The second author is supported by ISF grant 765/18}
	\address[Oren Louidor]{Technion - Israel Institute of Technology}
	\urladdr{https://iew.technion.ac.il/~olouidor/}
	\email{oren.louidor@gmail.com }
\author{Chanwoo Oh}
	\address[Chanwoo Oh]{Technion - Israel Institute of Technology}
	\email{chanwoo.oh@outlook.com}
\author{Eviatar B. Procaccia}
	\address[Eviatar B. Procaccia]{Technion - Israel Institute of Technology}
	\urladdr{http://procaccia.net.technion.ac.il}
	\email{procaccia@technion.ac.il}
\begin{abstract}
In this paper, we consider a non-homogeneous discrete-time Markov chain which can be seen as a toy model for the growth of the arms of the DLA (Diffusion limited aggregation) process in a sub-linear wedge. It is conjectured that in a thin enough linear wedge  there is only one infinite arm in the DLA cluster and we demonstrate this phenomenon in our model. The technique follows a bootstrapping argument, in which we iteratively prove ever faster growth rate.
\end{abstract}

\maketitle
\global\long\def\mc#1{\mathcal{#1}}%

\global\long\def\ms#1{\mathscr{#1}}%

\global\long\def\mb#1{\mathbb{#1}}%

\global\long\def\mf#1{\mathfrak{#1}}%

\global\long\def\Cap{\text{cap}}%

\global\long\def\l{\ell}%

\global\long\def\z{\zeta}%

\global\long\def\a{\alpha}%

\global\long\def\b{\beta}%

\global\long\def\d{\delta}%

\global\long\def\e{\varepsilon}%

\global\long\def\g{\gamma}%

\section{Introduction}

The Diffusion Limited Aggregation (DLA) model was introduced by Witten and Sander \cite{witten1981diffusion}, as a simple model for various natural phenomena such as electro-chemical deposition, ion-aggregation and even bacterial growth patterns. DLA is defined by iteratively bringing random walks from infinity, via the discrete harmonic measure, and connecting the last vertex and edge traversed by the random walk before hitting the current aggregate. Although the model was defined in the 1980's, many of the most basic questions remain unanswered. No proof currently exists that DLA has fractal behavior, and no non-trivial lower bound for the growth rate is known. Kesten \cite{kesten1987} proved via a discrete version of the Beaurling projection theorem, that after $n$ particles, the arms of DLA are asymptotically at most at distance $n^{2/3}$. Recently \cite{berger2021growth,procaccia2021dimension}, Berger, Procaccia and Turner, established an exact fractal dimension result, mathching to Kesten's upper bound for an off lattice variant of DLA called Stationary Hastings-Levitov$(0)$. 

One of the long standing conjectures is that in thin enough linear wedge there exists only one infinite arm in a DLA cluster \cite{kessler1998diffusion,Stabilization_of_DLA}. We define the number of arms as the number of topological ends, i.e. let $B(r)=\{x\in\BZ^2: \|x\|_2 \le r \}$ and let $A_n$ be the DLA process after attaching $n$ particles. Since $\left( A_n \right)_{n\geq 0}$ is increasing, it has a well defined limit $A_\infty$. The number of topological ends is 
$$
\limsup_{r\to\infty}\#\{\text{connected components of }A_\infty\setminus B(r)\} 
.$$

In this work, we consider a non-homogeneous discrete-time Markov chain, which serves as a toy model for the growth of DLA arms in a sub-linear wedge. We show that this process becomes a monotone increasing sequence after a time which is finite almost surely by means of a bootstrapping argument, which corresponds to the existence of only one infinite DLA arm.
Formally, let $D_{n}$ be a Markov chain with $D_{0}=0,\,D_{1}=1$. For $n\geq1$,
$D_{n+1}$ is given as follows: if 
$D_{n}\neq0$, then 
\begin{equation}
\label{e:1}
D_{n+1}=\begin{cases}
D_{n}+1, & {\rm with\,probability}\,1-\frac{1}{2}e^{-C\frac{D_{n}}{n^{\alpha}}}\\
D_{n}-1, & {\rm with\,probability}\,\frac{1}{2}e^{-C\frac{D_{n}}{n^{\alpha}}}
\end{cases},
\end{equation}
and if $D_{n}=0$, then $D_{n+1}=1$. Here, $0<\alpha<1$ and $C>0$.

To justify the connection to DLA, consider the sub-linear wedge $$W_{\alpha}=\left\{ \left(x,y\right)\in\mb Z^{2}\,:\,0\leq y\leq x^{\alpha},\,x\geq0\right\},$$ where $\alpha\in\left(0,1\right)$ and let $\left(A_{n}\right)_{n\geq0}$ be the DLA process on $W_\alpha$. The latter is defined by setting $A_{0}=\left\{ \left(0,0\right)\right\} $ and, for $n \geq 1$,  letting $A_{n}=A_{n-1}\cup\left\{ a_{n}\right\}$, where $a_{n}$ is the last vertex visited by a simple random walk which starts from infinity, remains confined to the wedge, and stopped when it hits $A_{n-1}$. For details of the construction, refer to \cite{Stabilization_of_DLA}.

\begin{figure}[h!]
\begin{centering}
\includegraphics[height=3cm]{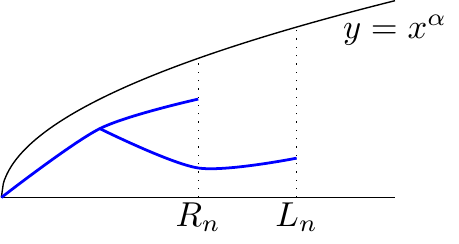}\caption{Diffusion limited aggregation in a sub-linear wedge with two longest
arms}
\par\end{centering}
\end{figure}

Now, suppose that $L_n\ge R_n$ are the $x$-coordinates of the tips of the two longest arms of $A_n$, defined in some natural way (e.g. one may take the maximal $x$ for which the aggregate contains at least $1$, respectively $2$, distinct vertices with $x$ as their horizontal coordinate). The probability that a random walk as above  reaches the shorter arm before it hits the longer one, is approximately the probability that a planar random walk which starts at the origin, hits the line $\{x=\tilde{D}_n := L_n-R_n\}$ before hitting any of the two half lines $\{y= 0, \, x \geq 0\}$ and $\{y= L_n^\alpha, \, x \geq 0\}$. Dividing the tube $[0,\tilde{D}_n] \times \{0, L^{\alpha}\}$ into $\tilde{D}_n/L_n^\alpha$ squares of side length $L_n^\alpha$, and using the invariance principle for planar random walks, it is not difficult to see that the latter crossing probability decays exponentially in $\tilde{D}_n/L_n^\alpha$. If we further stipulate that most of time growth happens at the tip of an arm -- an assumption not uncommon in the Physics literature for approximating the growth of DLA (c.f., the Laplacian growth models~\cite{barra2001laplacian,gustafsson2014classical,hastings1998laplacian}), then $L_n$ is proportional to $n$ and the crossing probability becomes exponential in $\tilde{D}_n/n^\alpha$, taking a form as in~\eqref{e:1}. 

We remark that in order for the result of this paper to translate to the arms of DLA, it  would be enough to prove a variant of Proposition \ref{prop:MainProp} with $\tilde{D}_n=L_n-R_n$ replacing the Markov chain $D_n$. Unfortunately, making a precise statement based on the above considerations for DLA remains open.




\section{Results}
Let $(D_n)_{n\geq0}$ be defined as above \eqref{e:1}. The following theorem is the main result of this paper.
\begin{thm}
\label{thm:MainThm}For any $\a\in(0,1)$ and $C>0$, $$\mb P\left(\exists\,N\in\mb N\textup{ such that }\forall\,n\geq N,\,D_{n+1}=D_{n}+1\right)=1.$$
\end{thm}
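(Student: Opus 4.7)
The plan is to reduce the theorem to a polynomial lower bound on $D_n$ and to derive this bound by iteration. It suffices to show that there exist $\beta>\alpha$ and an almost surely finite $N$ such that $D_n\ge n^{\beta}$ for all $n\ge N$. Under such a bound, the downward-step probability satisfies $\frac{1}{2}e^{-CD_n/n^{\alpha}}\le \frac{1}{2}e^{-Cn^{\beta-\alpha}}$, which is summable in $n$, so Borel--Cantelli yields only finitely many downward steps almost surely. Since the reflection rule at $D_n=0$ also produces a $+1$ step, ``no downward steps after time $N$'' is equivalent to $D_{n+1}=D_n+1$ for all $n\ge N$, proving the theorem.

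The polynomial bound is established by a bootstrap on the growth exponent. Consider the Doob decomposition $D_n=M_n+A_n$, where $A_n=\sum_{k<n}\bigl(1-e^{-CD_k/k^{\alpha}}\bigr)$ is the compensator and $M_n$ is a martingale with increments bounded by $2$. On the event that $D_k\ge k^{\gamma}$ for all $k\in[m,n]$ with $\gamma<\alpha$, a Riemann-sum estimate using $1-e^{-x}\ge x/2$ for small $x$ shows that $A_n-A_m$ grows at least like $c\, n^{1+\gamma-\alpha}$, while Azuma--Hoeffding controls $|M_n-M_m|$ at scale $\sqrt{n\log n}$. When $1+\gamma-\alpha>\frac{1}{2}$, the drift dominates and we can upgrade the exponent from $\gamma$ to any $\gamma'<1+\gamma-\alpha$, with a failure probability summable along a geometric sequence of times (and hence, by Borel--Cantelli, giving an a.s.\ pathwise bound). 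Since each iteration gains nearly $1-\alpha$, after finitely many steps the exponent exceeds $\alpha$, finishing the argument.

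The main obstacle is seeding the bootstrap with an initial a.s.\ lower bound $D_n\ge n^{\gamma_0}$ for some $\gamma_0>0$. Near $D_n=0$ the drift vanishes and the chain behaves like a reflected simple random walk, so a priori $D_n$ could return to $0$ infinitely often. I would handle this via an excursion/supermartingale argument: for a small $\lambda>0$, check that above a suitable level the process $e^{-\lambda D_n}$ is a supermartingale, so the probability of returning from level $L$ down to $0$ is at most $e^{-\lambda L}$; combined with the fact that on any sufficiently long time interval the accumulated drift pushes $D_n$ above any prescribed threshold, Borel--Cantelli yields only finitely many visits to $0$, and a refinement of the same argument produces the initial polynomial lower bound. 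When $\alpha\ge\frac{1}{2}$ the first iterations also require extra care, since the deterministic gain $n^{1+\gamma-\alpha}$ may not yet exceed the martingale scale $\sqrt{n}$; this is handled by tracking lower bounds only along suitable stopping times and using sharper subgaussian tail bounds until the exponent comfortably passes $\frac{1}{2}$, after which the generic bootstrap step takes over.
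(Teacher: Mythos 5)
Your overall architecture --- Doob decomposition $D_n=M_n+A_n$, Azuma--Hoeffding control of $M_n$ at scale $n^{1/2+o(1)}$, a bootstrap that upgrades a lower bound $D_k\ge k^{\gamma}$ to one with exponent near $1+\gamma-\alpha$ (a gain of $1-\alpha$ per iteration), and finally summability of the down-step probabilities once the exponent exceeds $\alpha$ --- is exactly the paper's strategy, and the final reduction via (conditional) Borel--Cantelli is a legitimate variant of the paper's product-of-conditional-probabilities computation. The genuine gap is in the seeding step, which you correctly identify as the main obstacle but whose proposed resolution does not work. The process $e^{-\lambda D_n}$ is \emph{not} a supermartingale above any fixed level $L$: writing $p_n=\tfrac12 e^{-CD_n/n^{\alpha}}$ for the down probability, the supermartingale condition $(1-p_n)e^{-\lambda}+p_ne^{\lambda}\le 1$ requires $p_n\le \frac{1}{1+e^{\lambda}}=\tfrac12-\Theta(\lambda)$, i.e.\ $D_n\gtrsim \lambda n^{\alpha}$ --- a level that grows with time. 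At a fixed height $L$ and large $n$ the drift $1-e^{-CL/n^{\alpha}}$ vanishes and the chain is essentially a symmetric walk there, so the claimed return-probability bound $e^{-\lambda L}$ from level $L$ to $0$ does not follow, and ``only finitely many visits to $0$'' cannot be extracted this way; indeed that statement is essentially as hard as the theorem itself. The companion claim that ``on any sufficiently long interval the accumulated drift pushes $D_n$ above any threshold'' is circular near $0$, since the drift is itself proportional to $D_i/i^{\alpha}$ and is negligible precisely when $D_i$ is small.

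The paper's seed is deliberately much weaker and avoids all of this: since the up-probability is always at least $\tfrac12$, one couples $D$ from below with a reflected simple random walk $|S|$, and Donsker's theorem (occupation times of half-lines) gives that, with probability at least $1-\e$, a $(1-\e)$-fraction of the times $m\le n$ satisfy $D_m\ge m^{\frac12-\delta}$. A fraction-of-times bound suffices to lower-bound the compensator $A_m\ge\sum_i(1-e^{-CD_i/i^{\alpha}})$, so no pathwise ``for all $m$'' seed and no control of visits to $0$ is ever needed. This also exposes a second problem with your plan: you want almost-sure pathwise bounds at every stage, obtained by Borel--Cantelli along geometric times, but the failure probability of the occupation-time seed at scale $n$ (for fixed $a$ and $\e$) converges to a positive constant rather than being summable. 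The paper sidesteps this by carrying a fixed $\e$ through the entire bootstrap (all statements are ``with probability $\ge 1-2\e$ on $[(k+1)\e n,n]$'') and converting to probability one only at the very end, after the exponent has passed $\alpha$, at which point the conditional probability of monotone growth forever is at least $\exp\bigl(-\sum_{k\ge N}e^{-Ck^{\beta-\alpha}}\bigr)\to 1$. You would need to either adopt this ``fixed $\e$, upgrade at the end'' structure or supply genuinely new quantitative estimates to make your almost-sure seeding go through.
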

The main proof idea is using Doob's decomposition theorem to decompose this process into a martingale and a predictable process. The latter is an increasing process since this Markov chain is a submartingale. We get an asymptotic growth bound for the martingale using Azuma-Hoeffding inequality. At each step, we get better asymptotic growth bounds for the increasing process and subsequently for the Markov chain. 

We will use the following proposition to prove this theorem.
\begin{prop}
\label{prop:MainProp}For any $\varepsilon>0$, there exist $\beta$
with $\alpha<\beta<1$ and a natural number $N$, and $s>0$ such that
\[
\textrm{for any }n\geq N,\,\,\mathbb{P}\left(D_{n}\geq s n^{\beta}\right)\geq1-\varepsilon.
\]
\end{prop}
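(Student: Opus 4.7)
The plan is to prove Proposition \ref{prop:MainProp} by the bootstrapping scheme announced in the proof idea of Theorem \ref{thm:MainThm}. By Doob's decomposition I write $D_n = M_n + A_n$, where $A_n = \sum_{k<n}\mathbb{E}[D_{k+1}-D_k\mid\mathcal{F}_k]$ is predictable and non-decreasing (with increment $1-e^{-CD_k/k^{\alpha}}$ on $\{D_k>0\}$) and $M_n$ is a martingale with $|M_{k+1}-M_k|\le 2$. Azuma--Hoeffding gives $\mathbb{P}(|M_n|\ge t)\le 2\exp(-t^{2}/(8n))$ and, via its maximal form, the analogous bound for $\max_{m\le k\le n}|M_k-M_m|$; these two estimates absorb the martingale noise at each stage.

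For the base case, on $\{D_k>0\}$ the up-probability is always at least $1/2$ and $0$ is reflecting. A pathwise coupling with common uniform driving variables (using that both $D_n$ and $|S_n|$ remain in states of the same parity when started at $0$) gives $D_n\ge_{\textup{st}}|S_n|$ for a simple symmetric random walk $(S_n)$. The CLT for $|S_n|/\sqrt{n}$ then produces, for any prescribed small $\varepsilon_0>0$, constants $s_0>0$ and $N_0$ with $\mathbb{P}(D_n\ge s_0 n^{1/2})\ge 1-\varepsilon_0$ for every $n\ge N_0$. Choosing $\varepsilon_0$ much smaller than $\varepsilon$ (so as to absorb the bounded number of iterative losses below), set $\beta_0:=1/2$.

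For the inductive step, suppose $\mathbb{P}(D_n\ge s_i n^{\beta_i})\ge 1-\varepsilon_i$ for all large $n$, with $\beta_i\in[1/2,\alpha)$. Fix $n$ large and pick $m=m(n)\le n$ so that $(s_i m^{\beta_i})^{2}\gg (n-m)\log n$; concretely $m=\lfloor n/2\rfloor$ if $\beta_i>1/2$, and $m=n-\lfloor n^{1-\delta}\rfloor$ for tiny $\delta>0$ if $\beta_i=1/2$. On the event
\[
E_n=\{D_m\ge s_i m^{\beta_i}\}\cap\bigl\{\max_{m\le k\le n}|M_k-M_m|\le \tfrac{s_i}{2}m^{\beta_i}\bigr\}\cap\bigl\{|M_n|\le C'\sqrt{n\log n}\bigr\},
\]
which has probability $\ge 1-\varepsilon_i-o(1)$ by the inductive hypothesis and two applications of the maximal Azuma bound, the inequality $D_k\ge D_m-|M_k-M_m|\ge\tfrac{s_i}{2}m^{\beta_i}$ holds throughout $k\in[m,n]$. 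Using $1-e^{-x}\ge x/2$ for $x\le 1$, $m\asymp n$ and $\beta_i<\alpha$, this yields
\[
A_n-A_m\;\ge\;\sum_{k=m}^{n-1}\Bigl(1-\exp\bigl(-\tfrac{Cs_i}{2}\,m^{\beta_i}/k^{\alpha}\bigr)\Bigr)\;\gtrsim\;(n-m)\,n^{\beta_i-\alpha}.
\]
Combining this with $D_n=D_m+(M_n-M_m)+(A_n-A_m)$ and the global martingale bound, one obtains $D_n\gtrsim n^{\beta_{i+1}}$ on $E_n$, with $\beta_{i+1}:=1+\beta_i-\alpha$ (up to an $O(\delta)$ loss at the first step) and $\beta_{i+1}>\beta_i$ since $\alpha<1$.

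Iterating, $\beta_{i+1}-\beta_i\ge 1-\alpha>0$ at every step, so finitely many, say $K$, iterations bring $\beta_K$ above $\alpha$; the last iteration (performed from $\beta_{K-1}<\alpha$) automatically keeps $\beta_K=1+\beta_{K-1}-\alpha<1$, so $\beta_K\in(\alpha,1)$ is the desired exponent, and the choice of $\varepsilon_0$ is small enough that the accumulated error stays below $\varepsilon$. The main obstacle, and the reason for using the maximal inequality rather than just Azuma at a single time, is amplifying the pointwise-in-$n$ control to uniform-in-$k$ control over a window $[m,n]$: a naive union bound would lose a factor of order $n$ in probability. The choice of $m$ is most delicate at $\beta_i=1/2$, where signal and martingale fluctuations sit on the same $\sqrt{n}$ scale, and is precisely why the slightly nonstandard window $m=n-\lfloor n^{1-\delta}\rfloor$ is used for the very first bootstrap.
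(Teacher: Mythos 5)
Your proof is correct, but it reaches Proposition \ref{prop:MainProp} by a genuinely different route than the paper. The paper's base input is an occupation-time statement: via the coupling with $|S_n|$ and Donsker's theorem (occupation times of half-lines), it shows that $D_m\ge m^{1/2-\delta}$ for at least a $(1-\varepsilon)$-fraction of the times $m\le n$, and this \emph{density} of good times is what feeds the first drift sum. You instead use the coupling only to get the single-time bound $\mathbb{P}(D_m\ge s_0\sqrt{m})\ge 1-\varepsilon_0$ from the CLT, and then upgrade it to control over the whole window $[m,n]$ via the identity $D_k-D_m=(A_k-A_m)+(M_k-M_m)\ge -\max_{m\le j\le n}|M_j-M_m|$, which uses that the compensator is non-decreasing, together with a maximal Azuma bound. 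This monotonicity trick does not appear in the paper and lets you dispense with the Donsker/occupation-time input entirely; it is also what forces your careful window choices ($n-\lfloor n^{1-\delta}\rfloor$ at the first step, $n/2$ afterwards), whereas the paper works on windows $[(k+1)\varepsilon n,\,n]$ throughout and controls the martingale by a plain union bound over Azuma at the threshold $m^{1/2+\bar{\delta}}$ — which suffices there because the drift gain $m^{1/2-\delta+(1-\alpha)}$ already dominates $m^{1/2+\bar{\delta}}$, so your worry about losing a factor of $n$ in a union bound is not actually the paper's obstacle. The exponent bookkeeping $\beta\mapsto 1+\beta-\alpha$ and the termination argument are essentially identical in the two proofs, including the need to perturb by a small $\delta$ to avoid the degenerate case $\tfrac12+j(1-\alpha)=\alpha$, which the paper handles explicitly in its choice of $\delta$ and $K$ and which you handle implicitly through the $O(\delta)$ loss at the first step. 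Two minor points: the event $\{|M_n|\le C'\sqrt{n\log n}\}$ in your $E_n$ is redundant given the window bound on $M_k-M_m$; and for $\alpha\le\tfrac12$ your base exponent $\tfrac12$ is already at least $\alpha$, so the iteration degenerates to at most one step (with the drift increment bounded below by a constant rather than by $x/2$) — this case deserves a sentence but causes no difficulty.
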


\section{Proof of Proposition~\ref{prop:MainProp}} 

%


For $\e>0$, $\d\in(0,\frac{1}{2})$, and $n\in\mathbb{N}$, define the event
\[
E_{1,n,\e,\d}:=\left\{ \frac{\left|\left\{ 0<m\leq n\,:\,D_{m}\geq m^{\frac{1}{2}-\delta}\right\} \right|}{n} \geq 1-\e\right\} ,
\]
 where $\left|\cdot\right|$ represents the number of elements of
a set. Since $\left( D_n \right)_{n\geq 0}$ is clearly a sub-martingale with respect to the natural filtration, we may consider its Doob Decomposition $D_{n}=A_{n}+M_{n}$, where $\left( A_n \right)_{n\geq 1}$ is predictable and increasing, and $\left( M_n \right)_{n\geq 1}$ is a martingale. For $\e>0$, $\bar{\delta}\in(0,\frac{1}{2})$, and $n\in\mathbb{N}$,
define the event
\[
E_{2,n,\e,\bar{\delta}}:=\left\{ \left|M_{m}\right|\leq m^{\frac{1}{2}+\bar{\delta}}\textrm{ for all }2\e n\leq m\leq n\right\} .
\]
Also, let $E_{n,\e,\d,\bar{\delta}}:=E_{1,n,\e,\d}\cap E_{2,n,\e,\bar{\delta}}$.

The following lemmas provide probability bounds for $E_{1,n,\e,\d}$
and $E_{2,n,\e,\bar{\delta}}$.
\begin{lem}
\label{lem:E1}For any $\e>0$ and $\d\in(0,\frac{1}{2})$, there exists a natural number $N$
such that
\[
\textrm{for any }n\geq N,\,\,\mathbb{P}\left(E_{1,n,\e,\d}\right)\geq1-\e.
\]
\end{lem}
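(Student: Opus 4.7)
The plan is to couple the chain $D_n$ with a simple symmetric random walk so that $D_n$ dominates a reflected walk at every time, then use the reflection principle to bound $\mathbb{P}(D_m < m^{1/2-\delta})$ pointwise, and conclude by Markov's inequality applied to the expected number of ``bad'' times $m \leq n$ for which $D_m < m^{1/2-\delta}$.

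Let $(U_k)_{k \geq 1}$ be i.i.d.\ Uniform$[0,1]$ random variables and set $\xi_k := 2\cdot\ind\{U_k < 1/2\} - 1$, so that $S_n := \sum_{k=1}^n \xi_k$ is a simple symmetric random walk. Using the same uniforms, realize $(D_n)$ from the prescribed rule: $D_n = 1$ if $D_{n-1} = 0$, and otherwise $D_n = D_{n-1} + 1$ iff $U_n < p_{n-1} := 1 - \tfrac{1}{2} e^{-C D_{n-1}/(n-1)^\alpha}$, which always satisfies $p_{n-1} \geq 1/2$. Then for every $k$ the increment $\Delta D_k := D_k - D_{k-1}$ dominates $\xi_k$, with excess $\Delta D_k - \xi_k \in \{0,2\}$; at the reflecting boundary $D_{k-1} = 0$ this holds trivially because $\Delta D_k = 1 \geq \xi_k$. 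Consequently $Y_n := \sum_{k=1}^n (\Delta D_k - \xi_k)$ is nonnegative and nondecreasing, and $D_n = S_n + Y_n$. Since $D_k \geq 0$ gives $Y_k \geq -S_k$, monotonicity of $Y$ yields $Y_n \geq \max_{k \leq n}(-S_k) = -\min_{k \leq n} S_k$, and therefore
\[
D_n \,\geq\, S_n - \min_{k \leq n} S_k \,=:\, R_n,
\]
the Skorokhod reflection of $S$ at $0$.

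By the time-reversal $\tilde S_j := S_n - S_{n-j}$, one has $R_m \stackrel{d}{=} \max_{k \leq m} S_k$, and the classical reflection principle yields the exact identity $\mathbb{P}(\max_{k\leq m} S_k < a) = \mathbb{P}(-a \leq S_m < a)$ for every integer $a \geq 1$. Substituting $a = \lceil m^{1/2-\delta} \rceil$ and invoking the local central limit theorem, we obtain a constant $C_\delta > 0$ with
\[
\mathbb{P}(D_m < m^{1/2-\delta}) \,\leq\, \mathbb{P}(R_m < m^{1/2-\delta}) \,\leq\, C_\delta\, m^{-\delta}, \qquad m \geq 1.
\]
Let $X_m := \ind\{D_m < m^{1/2-\delta}\}$. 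Summing and using $\sum_{m=1}^n m^{-\delta} \leq C'_\delta n^{1-\delta}$, we get $\mathbb{E}\bigl[\sum_{m=1}^n X_m\bigr] \leq C_\delta C'_\delta\, n^{1-\delta}$, so Markov's inequality gives
\[
\mathbb{P}(E_{1,n,\varepsilon,\delta}^c) \,=\, \mathbb{P}\Bigl(\sum_{m=1}^n X_m > \varepsilon n\Bigr) \,\leq\, \frac{C_\delta C'_\delta}{\varepsilon}\, n^{-\delta},
\]
which is at most $\varepsilon$ once $n \geq N(\varepsilon,\delta) := \lceil (C_\delta C'_\delta/\varepsilon^2)^{1/\delta}\rceil$.

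The only nontrivial step is the coupling-based domination $D_n \geq R_n$; once established, the reflection-principle bound on $R_m$ and the Markov step are routine. The main point requiring care is the verification that $\Delta D_k \geq \xi_k$ remains valid at the reflecting boundary $D_{k-1}=0$, which is where the deterministic jump $D_{k-1}=0 \mapsto D_k = 1$ could \emph{a priori} conflict with a downward step of $S$; fortunately $1 \geq \xi_k$ holds unconditionally, so the monotone decomposition $D_n = S_n + Y_n$ with $Y_n$ nondecreasing goes through without change.
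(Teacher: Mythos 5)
Your proof is correct, but it takes a genuinely different route from the paper's after the initial coupling step. Both arguments begin by dominating $D$ from below by a reflected simple random walk: the paper couples $D_n$ with $|S_n|$ (somewhat tersely), while you couple with the Skorokhod reflection $R_n = S_n - \min_{k\le n}S_k$ via shared uniforms and verify the domination carefully, including at the boundary $D_{k-1}=0$. The divergence is in how the occupation-time bound is then extracted. The paper applies Donsker's invariance principle to the functional $\frac{1}{n}\left|\left\{ m\le n : |S_m|\ge a n^{1/2}\right\}\right|$, identifies the limit as the Brownian occupation time $\lambda\left(\left\{ t\in[0,1] : |B_t|\ge a\right\}\right)$, and sends $a\searrow 0$; this is a soft, non-quantitative argument that leans on the continuity of the occupation-time functional (outsourced to Durrett). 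You instead bound the marginal $\mathbb{P}\left(R_m < m^{1/2-\delta}\right)$ directly via time reversal, the reflection principle identity $\mathbb{P}\left(\max_{k\le m}S_k < a\right)=\mathbb{P}\left(-a\le S_m < a\right)$, and the local CLT, obtaining a pointwise bound $C_\delta m^{-\delta}$, and then conclude by a first-moment (Markov) argument on the number of bad times. Your version buys an explicit $N(\varepsilon,\delta)$ and a polynomial rate $n^{-\delta}$ for $\mathbb{P}\left(E_{1,n,\varepsilon,\delta}^{c}\right)$, and it avoids weak convergence entirely; the paper's version is shorter if one treats the Brownian occupation-time result as a black box. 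Both are complete proofs of the lemma.
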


\begin{proof}
Let $S_{n}$ be the simple symmetric random walk on $\mathbb{Z}$.
Let 
\[
R_{1,n,\e,\d}:=\left\{ \frac{\left|\left\{ 0<m\leq n\,:\,\left|S_{m}\right|\geq m^{\frac{1}{2}-\delta}\right\} \right|}{n} \geq 1-\e\right\} .
\]
Since for $D_n \neq 0$, $\mathbb{P} (D_{n+1} = D_n + 1 ) \geq \frac{1}{2}$ and $\mathbb{P} (D_{n+1} = D_n - 1 ) \leq \frac{1}{2}$,
by coupling $D_{n}$ and $\left|S_{n}\right|$, we get  $\mathbb{P}\left(E_{1,n,\e,\d}\right)\geq\mathbb{P}\left(R_{1,n,\e,\d}\right)$.
Also,
$$ \mathbb{P}\left(R_{1,n,\e,\d}\right) \geq \mathbb{P}\left(\left\{ \frac{\left|\left\{ 0<m\leq n\,:\,\left|S_{m}\right|\geq n^{-\d}n^{\frac{1}{2}}\right\} \right|}{n} \geq 1-\e\right\} \right).$$

By Donsker's invariance theorem, $\frac{\left|\left\{ 0<m\leq n\,:\,\left|S_{m}\right|\geq an^{\frac{1}{2}}\right\} \right|}{n}$
converges in distribution to $\lambda\left(\left\{ t\in[0,\,1]\,:\,\left|B_{t}\right|\geq a\right\} \right)$
for any $a>0$ as $n\rightarrow\infty$, where $\lambda$ is the
Lebesgue measure on the real line (for details, we refer to Example~8.6.4.
Occupation times of half-lines of \cite{durrett2010probability}.)
We can take $a>0$ such that $\mb P\left(\lambda\left(\left\{ t\in[0,\,1]\,:\,\left|B_{t}\right|\geq a\right\} \right) \geq 1-\e\right)\geq1-\frac{\e}{2}$,
since $\mb P\left(\lambda\left(\left\{ t\in[0,\,1]\,:\,\left|B_{t}\right|\geq a\right\} \right) \geq 1-\e\right)\rightarrow1\text{ as }a\searrow0$.
Take $N_{1},\,N_{2}$ to satisfy $N_{1}^{-\d}<a$ and
\begin{align*}
\left|\mathbb{P}\left(\left\{ \frac{\left|\left\{ 0<m\leq n\,:\,\left|S_{m}\right|\geq an^{\frac{1}{2}}\right\} \right|}{n} \geq 1-\e\right\} \right)\,\,\,\,\,\,\,\,\,\,\,\right.\\
\left.-\mb P\left(\vphantom{\frac{\left|\left\{ 0<m\leq n\,:\,\left|S_{m}\right|\geq an^{\frac{1}{2}}\right\} \right|}{n}}\lambda\left(\left\{ t\in[0,\,1]\,:\,\left|B_{t}\right|\geq a\right\} \right) \geq 1-\e\right)\right| & \leq\frac{\e}{2}
\end{align*}
 for all $n\geq N_{2}$. Let $N=\max\left(N_{1},N_{2}\right)$. Then,
for $n\geq N$ we have $\mathbb{P}\left(E_{1,n,\e,\d}\right)\geq1-\e.$
\end{proof}
\begin{lem}
\label{lem:E2}For any $\e>0$ and $\bar{\delta}\in(0,\frac{1}{2})$, there exists a natural number $N$
such that
\[
\textrm{for any }n\geq N,\,\,\mathbb{P}\left(E_{2,n,\e,\bar{\delta}}\right)\geq1-\e.
\]
\end{lem}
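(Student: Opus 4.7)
The plan is to apply Azuma--Hoeffding to the martingale part $M_m$ from the Doob decomposition, uniformly over $m \in [2\e n, n]$.

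First I would control the martingale increments. Since $D_{n+1}-D_n \in \{-1, +1\}$ almost surely whenever $D_n \neq 0$ and $D_{n+1}-D_n = 1$ when $D_n = 0$, we have $|D_{n+1}-D_n| \le 1$, and therefore $|A_{n+1}-A_n| = |\mathbb{E}[D_{n+1}-D_n \mid \mathcal{F}_n]| \le 1$ as well. Thus
\[
|M_{n+1}-M_n| = |(D_{n+1}-D_n)-(A_{n+1}-A_n)| \le 2
\]
for every $n\ge 0$, with $M_0 = 0$.

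Next I would invoke Azuma--Hoeffding for each fixed $m$: for any $t>0$,
\[
\mathbb{P}\bigl(|M_m| \ge t\bigr) \le 2\exp\!\left(-\frac{t^2}{8 m}\right).
\]
Choosing $t = m^{1/2+\bar{\delta}}$ yields
\[
\mathbb{P}\bigl(|M_m| \ge m^{1/2+\bar{\delta}}\bigr) \le 2\exp\!\left(-\frac{m^{2\bar{\delta}}}{8}\right).
\]

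Then I would apply a union bound over $m$ in the integer range $[2\e n, n]$:
\[
\mathbb{P}\bigl(E_{2,n,\e,\bar{\delta}}^{c}\bigr) \le \sum_{m=\lceil 2\e n\rceil}^{n} 2\exp\!\left(-\frac{m^{2\bar{\delta}}}{8}\right) \le 2n \exp\!\left(-\frac{(2\e n)^{2\bar{\delta}}}{8}\right).
\]
Since $\bar{\delta}>0$, the right-hand side tends to $0$ as $n\to\infty$, so there exists $N=N(\e,\bar{\delta})$ such that for all $n\ge N$, $\mathbb{P}(E_{2,n,\e,\bar{\delta}}) \ge 1-\e$.

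There is no real obstacle here: the only subtlety is verifying the uniform bound on $|M_{n+1}-M_n|$, which follows from the bounded one-step increments of $D_n$. The stretched-exponential tail from Azuma easily absorbs the linear-in-$n$ factor from the union bound.
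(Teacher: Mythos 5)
Your proof is correct and follows essentially the same route as the paper: bound the martingale increments by $2$ using the bounded one-step increments of $D_n$, apply Azuma--Hoeffding with $t=m^{1/2+\bar{\delta}}$, and take a union bound over $m\in[2\e n, n]$, noting that the stretched-exponential tail beats the linear factor $n$. No gaps.
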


\begin{proof}
Since $M_{m}=D_{m}-A_{m}$,
$$\left|M_{m+1}-M_{m}\right|\leq\Big|D_{m+1}-\ev[D_{m+1}|\mathfs{F}_m]\Big|\leq \Big|D_{m}+1-(D_{m}-1)\Big|=2$$
for every natural number $m$. By Azuma-Hoeffding inequality,
$$\mb P\left(\left|M_{m}\right|>m^{\frac{1}{2}+\bar{\d}}\right)\leq2\textrm{\,exp}\left(\frac{-m^{1+2\bar{\d}}}{8m}\right)=2\textrm{\,exp}\left(-\frac{1}{8}m^{2\bar{\d}}\right).$$
Then,
\begin{align*}
\mathbb{P}\left(E_{2,n,\e,\bar{\delta}}\right) & =\mb P\left(\left|M_{m}\right|\leq m^{\frac{1}{2}+\bar{\delta}}\,\,\textrm{for all }2\e n\leq m\leq n\right)\\
 & \geq1-\sum_{m=\left\lceil 2\e n\right\rceil }^{n}\mb P\left(\left|M_{m}\right|>m^{\frac{1}{2}+\bar{\delta}}\right)\,\,\geq1-\sum_{m=\left\lceil 2\e n\right\rceil }^{n}2\,\textrm{exp}\left(-\frac{1}{8}m^{2\bar{\d}}\right)\\
 & \geq1-2n\,\textrm{exp}\left(-\frac{1}{8}\left\lceil 2\e n\right\rceil ^{2\bar{\d}}\right).
\end{align*}
 As $n\rightarrow\infty$, $n\,\textrm{exp}\left(-\frac{1}{8}\left\lceil 2\e n\right\rceil ^{2\bar{\d}}\right)\rightarrow0$.
So there exists $N\in\mb N$ such that, for any $n\geq N$, $2n\,\textrm{exp}\left(-\frac{1}{8}\left\lceil 2\e n\right\rceil ^{2\bar{\d}}\right)\leq\e$.
Thus we get $\mathbb{P}\left(E_{2,n,\e,\bar{\delta}}\right)\geq1-\e$ for any $n\geq N$. 
\end{proof}
The following lemma is the key for the proof of the Proposition~\ref{prop:MainProp}.
The proof is by induction. At the $k$-th step of the induction,
we get an improved bound for $D_{m}$ for all $\left(k+1\right)\e n\leq m\leq n$.
Consider the drift term $A_m$ in the Doob Decomposition. If $D_{i}=0$, then $D_{i+1}=1$ and $A_{i+1}-A_{i}=1$. If $D_{i}\neq0$, then $A_{i+1}-A_{i}=\ev[ D_{i+1} - D_i | \mathfs{F}_i ] =\left(1-\frac{1}{2}e^{-C\frac{D_{i}}{i^{\alpha}}}\right)-\left(\frac{1}{2}e^{-C\frac{D_{i}}{i^{\alpha}}}\right)=1-e^{-C\frac{D_{i}}{i^{\alpha}}}$. So $A_m \geq \sum_{i=1}^{m-1}1-e^{-C\frac{D_{i}}{i^{\alpha}}}$.
To get a lower bound on $A_{m}$, we use the bound on $D_{i}$ from step $k-1$.
We collect the increments, $1-e^{-C\frac{D_{i}}{i^{\alpha}}}$,
for a long enough time larger than $k \e n$.
Combining this with the bound for $M_{m}$ in Lemma.~\ref{lem:E2},
we obtain a better bound for $D_{m}$ for all $\left(k+1\right)\e n\leq m\leq n$.

For the following lemma, we set $\delta>0$ and $K\in\mathbb{N}$ as follows: Take  $K=\max \left( \left\lfloor \frac{\alpha-\frac{1}{2}}{1-\alpha}\right\rfloor + 1,\, 1 \right)$. If $\a \leq \frac{1}{2}$, then take $\delta=\frac{1}{2} (1-\a)$.
If $\alpha>\frac{1}{2}$ and $\frac{\alpha-\frac{1}{2}}{1-\alpha}\in\mathbb{N}$,
then take $\delta=\frac{1-\alpha}{8}$. If $\alpha>\frac{1}{2}$ and $\frac{\alpha-\frac{1}{2}}{1-\alpha}\notin\mathbb{N}$,
then take $$\delta=\textrm{min}\left(\text{\ensuremath{\frac{1}{8}\left(\frac{1}{2}+K\left(1-\alpha\right)-\alpha\right)},\,\ensuremath{\frac{1-\alpha}{8}}}\right).$$ Then, the following holds: (1) $0<\delta<1/2$, (2) $\frac{1}{2}-\delta+\left(1-\alpha\right)>\frac{1}{2}$,
(3) $\frac{1}{2}-\delta+K\left(1-\alpha\right)>\alpha$, and (4) $\frac{1}{2}-\delta+\left(K-1\right)\left(1-\alpha\right)<\alpha$.

\begin{lem}
\label{lem:Bootstrap} If $\d$ and $K$ are as above, then for any $\e>0$ and any natural number $1\leq k\leq K$ there exists a natural number $N_{k}$ and a real number $s_{k}>0$
such that for any $n \geq N_{k}$, $$\mathbb{P}\left( \left\{ D_{m}\geq s_{k}m^{\frac{1}{2}-\d+k\left(1-\a\right)}\,\textrm{for all }\left(k+1\right)\e n\leq m\leq n\right\} \right)\geq1-2\e.$$
\end{lem}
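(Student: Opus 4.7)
My plan is to prove the lemma by induction on $k$, carrying out the whole argument on the event $E_{n,\varepsilon,\delta,\bar{\delta}}=E_{1,n,\varepsilon,\delta}\cap E_{2,n,\varepsilon,\bar{\delta}}$, which has probability at least $1-2\varepsilon$ for $n$ large by Lemmas~\ref{lem:E1} and~\ref{lem:E2}. Throughout I will fix $\bar{\delta}\in(0,(1-\alpha)-\delta)$, a non-empty interval by condition~(2), so that the martingale term $M_m$ from the Doob decomposition is dominated by the predictable term $A_m$ at every step of the bootstrap.

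The engine of the induction is the identity $A_{i+1}-A_i\geq 1-e^{-CD_i/i^{\alpha}}$, combined with the elementary inequality $1-e^{-x}\geq x/2$ valid for $x\in[0,1]$. Suppose at step $k$ we have $D_i\geq s_k\,i^{\frac{1}{2}-\delta+k(1-\alpha)}$ for all $i\in[(k+1)\varepsilon n,n]$. Substituting into the drift increment gives
\[
A_{i+1}-A_i\;\geq\;\frac{Cs_k}{2}\,i^{\,\frac{1}{2}-\delta+k(1-\alpha)-\alpha},
\]
the $x/2$-bound being legitimate because, by condition~(4), the exponent $\frac{1}{2}-\delta+k(1-\alpha)-\alpha$ is strictly negative for every $k\leq K-1$, so the argument of the exponential tends to zero. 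Summing from $i=(k+1)\varepsilon n$ up to $m-1$ produces a power sum with exponent exceeding $-1$, which evaluates to a positive constant times $m^{\frac{1}{2}-\delta+(k+1)(1-\alpha)}$ whenever $m\geq(k+2)\varepsilon n$; the constant stays bounded away from zero because the ratio of the endpoints $(k+1)/(k+2)$ is bounded away from $1$. Combining with $|M_m|\leq m^{\frac{1}{2}+\bar{\delta}}$ from $E_{2,n,\varepsilon,\bar{\delta}}$ and the dominance $\frac{1}{2}+\bar{\delta}<\frac{1}{2}-\delta+(k+1)(1-\alpha)$, one reads off $D_m\geq s_{k+1}m^{\frac{1}{2}-\delta+(k+1)(1-\alpha)}$ for a suitable $s_{k+1}>0$, closing the induction.

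The base case $k=1$ follows the same template but uses $E_{1,n,\varepsilon,\delta}$ in place of the inductive hypothesis: since at most $\varepsilon n$ indices in $[0,n]$ fail $D_i\geq i^{\frac{1}{2}-\delta}$, for $m$ a definite multiple of $\varepsilon n$ a positive fraction of indices in a sub-interval such as $[m/4,m]$ remain \emph{good}. On this sub-interval $i\asymp m$ and, since our choice of $\delta$ makes $\frac{1}{2}-\delta-\alpha<0$, the per-term lower bound satisfies $i^{1/2-\delta-\alpha}\geq m^{1/2-\delta-\alpha}$. Summing $\gtrsim m$ good terms delivers $A_m\gtrsim m^{3/2-\delta-\alpha}=m^{\frac{1}{2}-\delta+(1-\alpha)}$, and the martingale bound from $E_{2,n,\varepsilon,\bar{\delta}}$ is again subdominant.

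The main obstacle I foresee is careful bookkeeping at the two edges of each inductive step: (i) the left endpoint of the summation range, where one must verify that the sum over $[(k+1)\varepsilon n,m]$ is indeed a constant multiple of $m^{\frac{1}{2}-\delta+(k+1)(1-\alpha)}$ rather than a vanishing fraction of it; and (ii) the \emph{bad} indices in the base case, where the sub-interval must be chosen so that Lemma~\ref{lem:E1} still delivers $\gtrsim m$ good indices. Tracking the constants $s_k$ and the thresholds $N_k$ through the $K$ steps of the induction is tedious but straightforward, and no genuinely new idea is required beyond this interplay of Doob's decomposition with the power-sum estimate and the exponential inequality.
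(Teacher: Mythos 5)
Your proposal is correct and takes essentially the same route as the paper: the same induction on $k$ over the event $E_{1,n,\e,\d}\cap E_{2,n,\e,\bar{\d}}$, the same lower bound $A_{i+1}-A_i\geq 1-e^{-CD_i/i^{\alpha}}\geq \tfrac{C}{2}D_i i^{-\alpha}$ fed by the previous level's estimate, the same power-sum/integral comparison over $[(k+1)\e n,m]$, and the same absorption of $|M_m|\leq m^{1/2+\bar{\d}}$ using $\tfrac12+\bar{\d}<\tfrac12-\d+(1-\a)$. The only (cosmetic) difference is in the base case, where you count good indices on a sub-interval $[m/4,m]$ rather than lower-bounding the sum over good indices by the tail sum $\sum_{i=\lceil\e n\rceil}^{m-1}$ as the paper does.
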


\begin{proof}
We use induction to prove that for each $1\leq k\leq K$ there exist a natural number $N_{k}$ and a real number $s_{k}>0$ such that for any $n \geq N_{k}$,  $\mathbb{P}\left(E_{n,\e,\d,\bar{\delta}}\right)\geq1-2\e$ and
$$ E_{n,\e,\d,\bar{\delta}}\subset\left\{ D_{m}\geq s_{k}m^{\frac{1}{2}-\d+k\left(1-\a\right)}\,\textrm{for all }\left(k+1\right)\e n\leq m\leq n\right\},$$
where $\bar{\delta} = \frac{1}{8}\left(1-\alpha-\delta\right)$.

We show the base case and the inductive step together. If $k>1$, assume the induction hypothesis holds for the index $k-1$. If $k=1$, by Lemma~\ref{lem:E1} and Lemma~\ref{lem:E2},
there exists $N_0 \in\mb N$ satisfying that $\mathbb{P}\left(E_{n,\e,\d,\bar{\delta}}\right)\geq1-2\e$ for any $n \geq N_0$. Suppose $N\geq N_{k-1}$ and $\left(k+1\right)\e n\leq m\leq n$, and
focus on the event $E_{n,\e,\d,\bar{\delta}}$.

First we show that for some positive real number $s_{k,1}$,
\begin{align} \label{eq:driftLB}
	A_m \geq \sum_{i=\left\lceil k \e n \right\rceil}^{m-1} s_{k,1} i^{\frac{1}{2}-\d+(k-1)\left(1-\a\right)-\a}.
\end{align}
If $k=1$,
\bae
A_{m} & =\sum_{i=0}^{m-1}\left(A_{i+1}-A_{i}\right) \quad \geq\sum_{1\leq i\leq m-1,\, 		D_{i}\geq i^{\frac{1}{2} - \delta}} 1-e^{-C\frac{D_{i}}{i^{\alpha}}} \\
& \geq\sum_{1\leq i\leq m-1,\, 		D_{i}\geq i^{\frac{1}{2} - \delta}} 1-e^{-C i^{\frac{1}{2} -\d-\a}} \\
& \geq\sum_{1\leq i\leq m-1,\, 		D_{i}\geq i^{\frac{1}{2} - \delta}} s_{1,1}i^{\frac{1}{2}-\delta-\a}\,,\,\textrm{ with }s_{1,1}=\frac{C}{2}\\
& \geq\sum_{i=\left\lceil \e n\right\rceil} ^{m-1}s_{1,1}i^{\frac{1}{2}-\delta-\a}\,,\,\textrm{ since it's on }E_{1,n,\e,\d}.
\eae
If $k>1$,
\bae
A_{m} & =\sum_{i=0}^{m-1}\left(A_{i+1}-A_{i}\right) \quad \geq\sum_{i=\left\lceil k \e n\right\rceil }^{m-1}1-e^{-C\frac{D_{i}}{i^{\alpha}}}\\
& \geq\sum_{i=\left\lceil k \e n\right\rceil }^{m-1}1-e^{-C\frac{s_{k-1} i^{\frac{1}{2}-\d+(k-1)\left(1-\a\right)}}{i^{\alpha}}},\text{ by the induction hypothesis}\\
& \geq\sum_{i=\left\lceil k \e n\right\rceil }^{m-1}s_{k,1}i^{\frac{1}{2}-\d+(k-1)\left(1-\a\right)-\a}\,,\,\textrm{ with }s_{k,1}= \frac{C s_{k-1}}{2}.\\
\eae
Then the summation in \ref{eq:driftLB} is greater than or equal to
\bae
& \int_{\left\lceil k \e n\right\rceil }^{m}s_{k,1}x^{\frac{1}{2}-\d+(k-1)\left(1-\a\right)-\a}\,dx\\
\geq \,\, &  s_{k,2}m^{\frac{1}{2}-\d+ k \left(1-\a\right)},\text{ for some constant \ensuremath{s_{k,2}>0} and \ensuremath{n\geq N_{k,1}}},\\
& \text{ where \ensuremath{N_{k,1}} is a sufficiently large natural number, since \ensuremath{\left(k+1\right)\e n\leq m}}.
\eae

Assume $n\geq N_{k,1}$ as well. Then $D_{m}=A_{m}+M_{m}\geq s_{k,2}m^{\frac{1}{2}-\d+ k \left(1-\a\right)}-m^{\frac{1}{2}+\bar{\d}}$, since it's
on $E_{2,n,\e,\bar{\delta}}$.
As $\frac{1}{2}+\bar{\d}<\frac{1}{2}-\d+ k\left(1-\a\right)$,
there exist some $N_{k,2}\in\mb N$ and $s_{k,3}>0$ such that
if $n\geq N_{k,2}$ as well then $D_{m}\geq s_{k,3}m^{\frac{1}{2}-\d+ k  \left(1-\a\right)}$. By taking $N_{k}=\text{max }\left\{ N_{k-1},N_{k,1},N_{k,2}\right\} $,
we complete the base case and the inductive step, which prove this lemma.\end{proof}

The proof of Proposition~\ref{prop:MainProp} follows directly by
using Lemma~\ref{lem:Bootstrap}, using $\frac{\e}{2}$, and setting $k=K$, $m=n$, $\b=\frac{1}{2}-\d+K\left(1-\a\right)$,
$N=N_{K}$, and $s=s_{K}$. We have $\a<\beta<1$, $N$, and $s>0$ satisfying
the condition of Proposition~\ref{prop:MainProp}.

\section{Proof of Theorem~\ref{thm:MainThm}}

We prove the following statement: 
\begin{align}\label{eq:infgrowth}
\forall \e>0, \, \exists N\in\mb N, \, \mathbb{P}\left(\textrm{for all }n\geq N,\,D_{n+1}=D_{n}+1\right)\geq1-2\e.
\end{align}
We can assume $s=1$ in using Proposition~\ref{prop:MainProp} by comparing $sn^\beta$ and $n^{\bar{\beta}}$ for  $\a<\bar{\beta}<\beta.$
By Proposition~\ref{prop:MainProp},
$$ \exists N_0 \in \mb N, \,  \forall n\geq N_0, \, \mathbb{P}\left(D_{n}\geq n^{\beta}\right)\geq1-\varepsilon,$$ 
for some $\beta$ with $\alpha<\beta<1$. Also,
$$ \exists N_1 \in \mb N, \,  \forall n\geq N_1,  \, 1-\frac{1}{2}\left(e^{-Cn^{\b-\a}}\right)\geq\exp\left(-e^{-Cn^{\b-\a}}\right),$$
and since $\sum_{k=1}^\infty e^{-C k^{\b - \a}} < \infty$,
$$ \exists N_2 \in \mb N, \,  \forall n\geq N_2, \,  \exp\left(-\sum_{k=n}^{\infty}e^{-C k^{\b - \a}}\right) \geq \frac{1-2\e}{1-\e}. $$

Let $N=\max\left(N_{0},N_{1},N_{2}\right)$. Then,
\bae
 & \mb P\left(\textrm{for all }n\geq N,\,D_{n+1}=D_{n}+1\right)\\
\geq \,\, & \mathbb{P}\left(D_{N}\geq N^{\beta},\,D_{N+1}=D_{N}+1,\,D_{N+2}=D_{N+1}+1,\,D_{N+3}=D_{N+2}+1,\,\cdots\right).
\eae
 Also,
\bae
 & \mathbb{P}\left(D_{N+1}=D_{N}+1,\,D_{N+2}=D_{N+1}+1,\,D_{N+3}=D_{N+2}+1,\,\cdots|\,D_{N}\geq N^{\beta}\right)\\
=\, &  \prod_{k=0}^\infty \mb P\left(D_{N+k+1}=D_{N+k}+1 \, | \, D_{N}\geq N^{\beta},\bigcap_{j=0}^{k-1}\{D_{N+j+1}=D_{N+j}+1\}\right)\\
\geq \,\,& \prod_{k=0}^\infty \left(1-\frac{1}{2}e^{-C\frac{N^{\b}+k}{(N+k)^{\a}}}\right)\\
\geq \,\,& \prod_{k=0}^\infty \left(1-\frac{1}{2}e^{-C (N+k)^{\b-\a}} \right) \\
\geq \,\,& \exp\left(-\sum_{k=N}^{\infty}e^{-C k^{\b - \a}}\right),\text{ since }n \geq N_1\\
\geq \,\,& \frac{1-2\e}{1-\e},\text{ since }n\geq N_{2}.
\eae
Thus,
\bae
 & \mathbb{P}\left(D_{N}\geq N^{\beta},\,D_{N+1}=D_{N}+1,\,D_{N+2}=D_{N+1}+1,\,D_{N+3}=D_{N+2}+1,\,\cdots\right)\\
= \,\,& \mathbb{P}\left(D_{N}\geq N^{\beta}\right)\mathbb{P}\left(D_{N+1}=D_{N}+1,\,D_{N+2}=D_{N+1}+1,\,\cdots|\,D_{N}\geq N^{\beta}\right)\\
\geq \,\,& \left(1-\e\right)\frac{1-2\e}{1-\e} \,\, \geq \,\, 1-2\e,
\eae
which completes the proof of \eqref{eq:infgrowth}.

From \eqref{eq:infgrowth}, we have that for any $\e>0$, there exists $N_\e \in\mb N$ such that $$\mathbb{P}\left(\forall n\geq N_\e ,\,D_{n+1}=D_{n}+1\right)\geq1-2\e.$$
As $\left\{\forall n\geq N_\e,\,D_{n+1}=D_{n}+1 \right\} \subset \left\{ \exists\,N \in\mb N \textrm{ such that } \forall\,n\geq N,\,D_{n+1}=D_{n}+1  \right\}$,
$$\mb P\left(  \exists\,N \in\mb N \textrm{ such that } \forall\,n\geq N,\,D_{n+1}=D_{n}+1 \right) \geq 1-2\e .$$
Because we can take $\e>0$ arbitrary,
$$\mb P\left(  \exists\,N \in\mb N \textrm{ such that } \forall\,n\geq N,\,D_{n+1}=D_{n}+1 \right) = 1,$$
which completes the proof of Theorem~\ref{thm:MainThm}.

\bibliographystyle{plain}
\addcontentsline{toc}{section}{\refname}\bibliography{MCfromDLAbib}

\end{document}